\documentclass[10pt]{article}
\usepackage[mathlines]{lineno}
\usepackage[left=2cm,right=2cm,top=2cm,bottom=2cm]{geometry}
\usepackage{mathtools,amssymb,latexsym,amsthm,amsmath,amsfonts}   
\usepackage{xcolor}
\usepackage{authblk}
\newtheorem{theorem}{Theorem}
\newtheorem{lemma}{Lemma}

\newtheorem{definition}{Definition}
\newcommand{\Li}{{\mbox{Lip}}}
\newcommand{\cS}{\mathcal{S}}
\newcommand{\cC}{\mathcal{C}}
\newcommand{\cD}{{\cal D}}
\newcommand{\cP}{\mathcal{P}}

\newcommand{\ux}{\underline{x}}
\newcommand{\uy}{\underline{y}}
\newcommand{\uz}{\underline{z}}
\newcommand{\puy}{\partial_{\underline{y}}}

\newcommand{\bC}{{\bf C}}
\newcommand{\so}{\scriptscriptstyle\mathcal{O}}
\newcommand{\uzz}{\underline{\zeta}}
\newcommand{\f}{\tilde{f}}
\newcommand{\g}{\tilde{g}}
\newcommand{\R}{\mathbb{R}}
\newcommand{\C}{\mathbb{C}}
\newcommand{\E}{{\bf E}}
\title{Decomposition of first order Lipschitz functions by Clifford algebra-valued harmonic functions}
\author[1]{Lianet De la Cruz Toranzo}
\author[2,3]{Ricardo Abreu Blaya}	
\author[1]{Swanhild Bernstein}

\affil[1]{Department of Mathematics and Informatics, TU Bergakademie Freiberg, Germany.}
\affil[2]{Facultad de Matem\'aticas, Universidad Aut\'onoma de Guerrero, M\'exico.}
\affil[3]{Investigador Invitado. Universidad UTE, Ecuador.}

\begin{document}
\maketitle
\begin{abstract}\noindent
In this paper we solve the problem on finding a sectionally Clifford algebra-valued harmonic function, zero at infinity and satisfying certain boundary value condition related to higher order Lipschitz functions. Our main tool are the Hardy projections related to a singular integral operator arising in bimonogenic function theory, which turns out to be an involution operator on the first order Lipschitz classes. Our result generalizes the classical Hardy decomposition of H\"older continuous functions on a simple closed curve in the complex plane.
\end{abstract}

\section{Introduction} 
A classical boundary value problem in Complex analysis is to find a sectionally holomorphic function $\Phi(z)$, zero at infinity and satisfying the given boundary condition $\Phi(t)^+-\Phi(t)^-=\varphi(t),$ $\,t\in\Gamma,$
where $\varphi$ satisfies the H\"older condition on $\Gamma$, i.e. $\varphi\in\bC^{0,\alpha}(\Gamma)=\{f: \vert f(t)-f(\tau)\vert\leq c\vert t-\tau\vert^\alpha;\,t,\tau\in \Gamma\},\,0<\alpha\leq1.$ Here $\Gamma$ is a closed smooth curve in the complex plane and $\Phi(t)^\pm$ are the limiting values as approaching the curve from the interior $\Omega_+$ and the exterior $\Omega_-$ domain respectively.
This problem is immediately solved by the Cauchy transform,
\begin{equation*}\label{Def_TC}
\cC\varphi(z):=\frac{1}{2\pi i}\int_{\Gamma}\frac{\varphi(\zeta)}{\zeta-z}d\zeta, z\in \C\setminus\Gamma, 
\end{equation*}
which has continuous limiting values as approaching $\Gamma$ from the interior or exterior domain \cite{Ga,Mu}. Those involve both the identity and the singular integral operator
\begin{equation*}\label{Def_S}
\cS \varphi(t)=p.v.\frac{1}{\pi i}\!\int_{\Gamma}\!\frac{\varphi(\zeta)}{\zeta-t}d\zeta=\lim\limits_{\epsilon\to 0} \frac{1}{\pi i}\!\int_{\Gamma\setminus\Gamma_\epsilon}\!\!\frac{\varphi(\zeta)}{\zeta-t}d\zeta,\,\, t\in \Gamma,\,\,\Gamma_\epsilon=B_\epsilon(t)\cap \Gamma
\end{equation*}
and they are expressed by the classical Plemelj-Sokhotski formula: \begin{equation}\label{Plemelj-Sokhotski}
\begin{cases} 
\displaystyle
\cC^+\varphi(t)=\lim\limits_{z\to t\atop z\in\Omega_+}\cC\varphi(z)
=\frac{1}{2}\varphi(t)+\frac{1}{2\pi i}\int_{\Gamma}\frac{\varphi(\zeta)}{\zeta-t}d\zeta
=\frac{1}{2}\big[I+\cS\big]\varphi,\\
\displaystyle
\cC^-\varphi(t)=\lim\limits_{z\to t\atop z\in\Omega_-}\cC\varphi(z)=
-\frac{1}{2}\varphi(t)+\frac{1}{2\pi i}\int_{\Gamma}\frac{\varphi(\zeta)}{\zeta-t}d\zeta
=\frac{1}{2}\big[-I+\cS\big]\varphi.
\end{cases}
\end{equation}

The limiting values of the Cauchy transform with density function satisfying the H\"older condition, also verify this \cite{Ga,Mu}. Therefore, the singular integral operator keeps invariant the H\"older class (Plemelj-Privalov theorem) and it yields the Hardy decomposition of H\"older functions: $\bC^{0,\alpha}(\Gamma)=\bC^{0,\alpha}(\Gamma)^+\oplus \bC^{0,\alpha}(\Gamma)^-,$ where the uniqueness may be seen as a consequence of the involution property for the singular operator, i.e. $\cS^2=I$.\\

In the present work, we aim to obtain a similar decomposition for first order Lipschitz functions in the framework of Clifford analysis instead, which is a multidimensional function theory where the so-called monogenic functions play a similar roll to that by the holomorphic ones in Complex analysis. Our main result is Theorem \ref{Th_inv} and it will be stated and proved in Section \ref{Sec_Main}.

\section{Preliminaries}
\subsection{Clifford algebras and Clifford analysis}
Denote by $\{e_i\}_{i=1}^m$ an orthonormal basis of $\R^m$ governed by the multiplication rules
\begin{eqnarray}
e_i^2=-1,\quad e_ie_{j}=-e_{j}e_i,\,\, \forall i,j=1,2,\dots, m;\,\,i<j.
\end{eqnarray} 
Then the real Clifford algebra $\R_{0,m}$ is generated by $\{e_i\}_{i=1}^m$ over the field of real numbers $\R$, so that an element $a\in\R_{0,m}$ may be written as $$a=\sum_{A} a_A e_A,$$ where $a_A\in\R$ and $A$ runs over all the possible ordered sets $A=\{1\le i_1<\cdots < i_k\le m\}$ or $A=\emptyset$ and $e_A:=e_{i_1}e_{i_2}\cdots e_{i_k},\,\,e_0=e_\emptyset=1.$ If we identify $\ux\in\R^m$ by $\ux=x_1e_1+\dots +x_me_m;\,x_i\in\R,i=\overline{1,m}$ then we easily see that $\R^m$ is embedded in $\R_{0,m}$.\\

A norm for $a\in \R_{0,m}$ may be defined by $\|a\|^{2}=\sum_A|a_A|^{2},$ so that the Clifford algebra can be seen as a $2^n$-Euclidean space with Euclidean metric. In particular, for $\ux\in\R^m$ we have $\|\ux\|=|\ux|$, where $|\cdot|$ denotes the usual Euclidean norm.\\

We will consider functions defined on subsets of $\R^m$ and taking values
in $\R_{0,m}$, namely $f=\sum_{A} f_A e_A$. We shall say that an $\R_{0,m}$-valued function belongs to certain classical class of functions, if each of its real components $f_A$ do so.

From now on, $\Omega$ stands for a Jordan domain, i.e. a bounded oriented connected open subset of $\R^{m}$ whose boundary is a compact topological surface. For simplicity, we assume $\partial\Omega$ to be sufficiently smooth, e.g. Liapunov surface. By $\Gamma$ we denote the boundary of $\Omega$, while $\Omega_+=\Omega$, $\Omega_-=\R^m\setminus{\Omega\cup\Gamma}$ if necessary.\\

An $\R_{0,m}$-valued function $f$ in $\bC(\Omega)$ is called left (resp. right) monogenic if $\cD_{\ux}\,f = 0$ (resp. $f\,\cD_{\ux} = 0$) in $\Omega$, where $\cD_{\ux}$ is the Dirac operator
\[
\cD_{\ux}={\partial_{x_1}}e_1+{\partial_{x_2}}e_2+\cdots +{\partial_{x_m}}e_m.
\]

It is easy to check that the so-called Clifford-Cauchy kernel $E_0 (\ux)=-\displaystyle\frac{1}{\sigma_{m}}\frac{\ux}{|\ux|^{m}}\,\,(\ux\neq0),$
where $\sigma_{m}$ stands for the surface area of the unit sphere in $\R^{m}$, is a two-sided monogenic function.\\

If $f\in\bC^1(\Omega_+\cup\Gamma)$ is monogenic in $\Omega_+$, then the following representation formula holds
\[
f(\ux)=\int_\Gamma E_0(\uy-\ux)n(\uy)f(\uy)d\uy,\quad \ux\in\Omega_+,
\]
where $n(\uy)$ stands for the outward pointing unit normal at $\uy\in\Gamma.$ \\

This formula gives rise to the cliffordian Cauchy type transform and its singular version 
\[
\cC_0=\int_\Gamma E_0(\uy-\ux)n(\uy)f(\uy)d\uy,\, (\ux\in\Omega_+)\quad \mbox{and}\quad \cS_0=2\int_\Gamma E_0(\uy-\ux)n(\uy)f(\uy)d\uy,\,(\ux\in\Gamma
).\]

When $f$ satisfies the H\"older condition then, as in \eqref{Plemelj-Sokhotski}, both operators are connected by the cliffordian Plemelj-Sokhotski formula \cite{Ift}, namely 
\begin{equation}\label{cliff_Plemelj-Sokhotski_C0}
[\cC_0f]^+=\frac{1}{2}[I+\cS_0]f\quad \mbox{and }\quad [\cC_0f]^-=\frac{1}{2}[-I+\cS_0]f.
\end{equation}
Note that $\cD_{\ux}$ factorizes the Laplace operator $\triangle_m$ in $\R^m$ in the sense that $
\cD_{\ux}^2=-\triangle_m.$ The fundamental solution of $\cD_{\ux}$ is thus given by $E_0(\ux)=\cD_{\ux} E_1(\ux),$ where
\[
E_1 (\ux)=\frac{1}{ (m-2)\sigma_m{|\ux|}^{m-2}}\,\,(\ux\neq 0,\,m>2)
\]
is the fundamental solution of the Laplacian in $\R^m$ ($m\ge 3$).\\

In addition, an $\R_{0,m}$-valued function $f$ in $\bC^k(\Omega)$ is called polymonogenic of order $k$ or simply $k$-monogenic (left) if it satisfies $\cD_{\ux}^k\,f = 0$ in $\Omega$. 

\subsection{Higher order Lipschitz functions and Whitney extension theorem}
Given an appropriate space of functions defined on a non-empty closed set $\E\subset\R^m$, how can these be extended to $\R^m$? The study of these problems and linear properties of some related extension operators in terms of Banach spaces, led to the higher order Lipschitz functions,  defined by E. Stein in \cite{St}, but going back to the works of H. Whitney \cite{Wh}, who introduced a notion of differentiation in a general set ($\bC^m$-continuity in terms of $f_{(k)}$). The most appropriate spaces are the ones given in terms of the modulus of continuity, in particular, the Lipschitz spaces $\Li(\alpha,\E)$ with exponent $0<\alpha\leq 1$ composed by those functions $f$ defined on $\E$ such that 
\begin{eqnarray}\label{Def_Holder}
|f(x)|\leq M\quad \mbox{and}\quad |f(x)-f(y)|\leq M|x-y|^\alpha\quad\mbox{for}\quad x,\,y\in\E,	
\end{eqnarray}
which are Banach spaces with the smallest $M$ in the above definition as norm. In fact, the linear extension operator ${\cal{E}}_0$ defined by
\begin{equation}\label{Def_op_E0}
{\cal{E}}_0(f)(x)=\begin{cases}
	f(x),&x\in\E\\ \sum\limits_{i}f(p_i)\varphi_i^*(x),&x\in\E^c
\end{cases}
\end{equation}
maps $\Li(\alpha,\E)$ continuously into $\Li(\alpha,\R^m)$ for $0<\alpha\leq 1$ and the norm is independent of the closed set $\E$. This definition deals with the ideas on decomposition of open sets into cubes and partition of the unity as well. It calls for more explanation, but it is beyond the scope of this paper. For more details we refer the reader to \cite[Ch. VI]{St}.\\

When $\alpha>1$, the space $\Li(\alpha,\E)$ is constituted by constants only. However, it can be generalized as follows.
\begin{definition}[Higher order Lipschitz functions]
Let $\E$ be a closed subset of $\R^m$, $k$ a nonnegative integer and $0<\alpha\le 1$. We shall say that a real valued function $f$, defined in $\E$, belongs to the higher order Lipschitz class $\Li(k+\alpha,\E)$ if there exist real valued functions  $f^{(j)}$, $0<|j|\le k$, defined on $\E$, with $f^{(0)}=f$, and so that if 
\begin{equation*}
f^{(j)}(x)=\sum_{|j+l|\le k}\frac{f^{(j+l)}(y)}{l !}(x-y)^{l}+R_j(x,y),\,\,x, y\in\E
\end{equation*}
then
\begin{equation*}
|f^{(j)}(x)|\le M,\,\,\,|R_j(x,y)|\le M|x-y|^{k+\alpha-|j|},\,\,x, y\in\E, |j|\le k,
\end{equation*}
where $M$ is a positive constant and $j= (j_1,\dots ,j_m)$ and $l= (l_1,\dots ,l_m)$ are $m$-dimensional multi-indices in ${\mathbb{N}}_0^m$.
\end{definition}
We recall the multi-indices conventions
\[
\ux^j := x_1^{j_1}\cdots x_m^{j_m},\,j!:=j_1!\cdots j_m!,\,|j|:=j_1+\cdots + j_m,\,\partial^{j}:=\frac{\partial^{|j|}}{\partial^{j_1}_{ x_1}\dots \partial^{j_m}_{ x_m}}.
\]

First, some remarks concerning this definition are in order. We note that the function $f^{(0)}=f$ does not necessarily determine the elements $f^{(j)}$ for an arbitrary $\E$. Therefore, an element of $\Li(k+\alpha, \E)$ has to be interpreted as a collection of functions $\{f^{(j)}:\E\mapsto\R,\,|j|\le k\}$. However, if $\E=\R^m$ and $f\in\Li(k+\alpha,\R^m)$, then according to the definition $f$ is continuous and bounded and has continuous and bounded partial derivatives $\partial^jf$ up to the order $k$ and the functions $f^{(j)}:=\partial^jf$ for $|j|=k$ belong to the space $\Li(\alpha,\R^m)$. Moreover, when $k=0$, the Lipschitz class becomes the usual class  $\bC_b^{0,\alpha}(\E)$ of bounded H\"older continuous functions in $\E$,  which is consistent with \eqref{Def_Holder}.\\

A generalization of \eqref{Def_op_E0} is given by the linear extension operator 
\begin{equation}\label{Def_op_Ek}
{\cal{E}}_k(f^{(j)})(x)=\begin{cases}
			f^{(0)}(x),&x\in\E\\ {\sum\limits_{i}}'P(x,p_i)\varphi_i^*(x),&x\in\E^c
		\end{cases}
	\end{equation}
where $P(x,y)$ denotes the Taylor expansion of $f$ about $y\in\E,$ i.e. $P(x,y)=\displaystyle\sum_{|l|\le k}\frac{f^{(l)}(y)}{l !}(x-y)^{l}$. 

For $0<\alpha\le 1$, the operator ${\cal E}_k$ maps $\Li(k+\alpha,\E)$ continuously into $\Li(k+\alpha,\R^m)$ and the norm is independent of the closed set $\E$. \\

By using the Clifford norm $\|\cdot\|$ the $\R_{0,m}$-valued class of functions $\Li(k+\alpha,\E)$ may also be defined by means of the corresponding compatibility conditions
\begin{equation*}
	R_j(\ux,\uy)=f^{(j)}(\ux)-\sum_{|l|\le k-|j|}\frac{f^{(j+l)}(\uy)}{l !}(\ux-\uy)^{l},\,\,\ux, \uy\in\E
\end{equation*}
\begin{equation*}
	\|f^{(j)}(\ux)\|\le M,\;\|R_j(\ux,\uy)\|\le M|\ux-\uy|^{k+\alpha-|j|},\,\,\ux, \uy\in\E, |j|\le k,
\end{equation*}
where the functions $f^{(j)}$ and $R_j(\ux,\uy)$ are $\R_{0,m}$-valued as well.\\

As we mentioned before, the higher order Lipschitz class is connected with the works of H. Whitney, namely his celebrated extension theorem \cite[Thm. I]{Wh}, which turns out to be an appropriate way to define our multidimensional singular integral operator later. 
\begin{theorem}[Whitney Extension Theorem]
Let $f\in\Li(k+\alpha,\E)$.  Then, there exists a function $\f\in\Li(k+\alpha,\R^m)$ satisfying
\begin{itemize}
\item[(i)] $\f|_\E = f^{(0)}$,
\item[(ii)] $\partial^{j}\f|_\E=f^{(j)}$, $0<|j|\le k$,
\item[(iii)] $\f\in \bC^\infty(\R^{m} \setminus \E)$.
\end{itemize}
\end{theorem}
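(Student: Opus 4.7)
The plan is to show that $\f := \mathcal{E}_k(f^{(j)})$, defined in \eqref{Def_op_Ek}, satisfies (i)--(iii). The construction behind $\mathcal{E}_k$ rests on the classical Whitney decomposition of $\E^c$ into a family of closed cubes $\{Q_i\}$ with mutually disjoint interiors whose diameters are comparable to their distances to $\E$, together with a subordinate smooth partition of unity $\{\varphi_i^*\}$ satisfying $\sum_i\varphi_i^*\equiv 1$ on $\E^c$ and derivative bounds $|\partial^l\varphi_i^*(\ux)|\le C_l(\mathrm{diam}\,Q_i)^{-|l|}$; each cube $Q_i$ is paired with a point $p_i\in\E$ with $\mathrm{dist}(Q_i,p_i)\le C\,\mathrm{dist}(Q_i,\E)$.

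Properties (i) and (iii) come almost for free. Property (i) is immediate from the first branch of \eqref{Def_op_Ek}. For (iii), at each $\ux\in\E^c$ only finitely many $\varphi_i^*$ are nonzero in a neighbourhood, the cut-offs are smooth and the Taylor polynomials $P(\cdot,p_i)$ are obviously $\bC^\infty$, so $\f\in\bC^\infty(\R^m\setminus\E)$.

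The delicate step is (ii): showing that for every $\uy\in\E$ and every $|j|\le k$, the classical partial derivative $\partial^j\f(\uy)$ exists and equals $f^{(j)}(\uy)$. The strategy is, for $\ux\in\E^c$, to control
\[
\partial^j\f(\ux)-\sum_{|l|\le k-|j|}\frac{f^{(j+l)}(\uy)}{l!}(\ux-\uy)^l
\]
by expanding via the Leibniz rule and then using the defining compatibility identity to rewrite every $f^{(l)}(p_i)$ in terms of Taylor data at $\uy$ plus the Lipschitz remainder $R_l(p_i,\uy)$. The key observations are: first, the partition-of-unity identities $\sum_i\varphi_i^*\equiv 1$ and $\sum_i\partial^s\varphi_i^*\equiv 0$ for $|s|\ge 1$ cause all purely polynomial contributions to cancel, leaving only terms proportional to the remainders $R_l(p_i,\uy)$; second, if $\varphi_i^*(\ux)\neq 0$ then $\ux$ lies in a slight enlargement of $Q_i$, which forces $|p_i-\uy|\le C'|\ux-\uy|$; third, $\|R_l(p_i,\uy)\|\le M|p_i-\uy|^{k+\alpha-|l|}$ by hypothesis. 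Combining these with the derivative bounds on $\varphi_i^*$ and the comparability $\mathrm{diam}\,Q_i\sim|\ux-\uy|$ produces an $O(|\ux-\uy|^{k+\alpha-|j|})$ error, which gives both the existence of the derivative and the identity $\partial^j\f(\uy)=f^{(j)}(\uy)$.

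Finally, membership $\f\in\Li(k+\alpha,\R^m)$ is precisely the continuity of $\mathcal{E}_k\colon\Li(k+\alpha,\E)\to\Li(k+\alpha,\R^m)$ recalled just before the theorem, and may be invoked rather than reproved. The main obstacle is the combinatorial bookkeeping in the previous paragraph: one must cleanly expose the cancellation of Taylor polynomials at different base points $p_i$ using the partition-of-unity identities, so that only the controlled remainders $R_l(p_i,\uy)$ survive in the error. Once this cancellation is made visible, the geometric and size estimates close the argument uniformly over the finitely many cubes meeting any neighbourhood of $\ux$.
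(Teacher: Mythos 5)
The paper does not actually prove this statement: it is quoted as Whitney's classical extension theorem, with the construction \eqref{Def_op_Ek} recalled and the result cited from Whitney \cite{Wh} and Stein \cite[Ch.~VI]{St}. So there is no in-paper argument to compare against, and what you have written is the standard textbook proof (Whitney cube decomposition of $\E^c$, subordinate partition of unity with derivative bounds $(\mathrm{diam}\,Q_i)^{-|l|}$, Leibniz expansion, cancellation of the polynomial parts via $\sum_i\varphi_i^*\equiv 1$ and $\sum_i\partial^s\varphi_i^*\equiv 0$, and control of the surviving remainders $R_l(p_i,\uy)$). Your treatment of (i) and (iii) is fine, and invoking the stated boundedness of ${\cal E}_k$ for membership in $\Li(k+\alpha,\R^m)$ is legitimate relative to the paper, which records that fact independently.

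Two points in step (ii) deserve care if this were to be written out. First, the comparability $\mathrm{diam}\,Q_i\sim|\ux-\uy|$ is false for an arbitrary $\uy\in\E$: one only has $\mathrm{diam}\,Q_i\sim\mathrm{dist}(\ux,\E)\le|\ux-\uy|$, and the factors $(\mathrm{diam}\,Q_i)^{-|s|}$ coming from derivatives of the cut-offs are then too large to absorb directly. The standard remedy is to first prove the Taylor estimate when $\uy$ is (essentially) a nearest point of $\E$ to $\ux$, where the comparability does hold, and then pass to general $\uy$ using the compatibility conditions between two points of $\E$; your sketch skips this reduction. Second, producing the $O(|\ux-\uy|^{k+\alpha-|j|})$ estimate for all $|j|\le k$ does not by itself yield that $\partial^j\f$ exists classically at points of $\E$ and equals $f^{(j)}$; one still needs the (standard, but nontrivial) converse-Taylor step, usually an induction on $|j|$ showing that each candidate derivative is in fact the derivative of the previous one. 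Neither issue is a wrong idea, but both are genuine gaps between your outline and a complete proof.
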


\section{Auxiliary results}
Our focus in this paper will be on the particular case of bimonogenic functions (i.e. $k$-monogenic with $k=2$), which are nothing more than $\R_{0,m}$-valued harmonic functions.\\

Bimonogenic functions enjoy a representation formula \cite[Theorem 7]{Ry}, namely if $f$ is bimonogenic in $\Omega$, then
\begin{equation}\label{repr_dom_+}
	f(\ux)=\int_\Gamma E_0(\uy-\ux)n(\uy) f(\uy)d\uy-\int_\Gamma E_1(\uy-\ux)n(\uy)\cD_{\uy} f(\uy)d\uy, x\in\Omega.
\end{equation}

Now assume $f\in \bC^2(\Omega_-)\cap \bC^{1}(\Omega_-\cup\Gamma)$ is bimonogenic in $\Omega_-$, $f(\infty)$ exists and $\cD_{\uy}f(\uy)=\so\big(\displaystyle\frac{1}{|\uy|}\big)$ as $|\uy|\to\infty$. Consider the ball $B_R(\ux)$ with center in $\ux\in\Omega_-$ and radius $R$ sufficiently big such that $\Omega_+\cup\Gamma\subset B_R(\ux)$, then if \eqref{repr_dom_+} is applied to the domain $B_R(\ux)\setminus \Omega_+\cup\Gamma$ we obtain
\[
f(\ux)=\int_{\Gamma^*} E_0(\uy-\ux)n(\uy) f(\uy)d\uy-\int_{\Gamma^*} E_1(\uy-\ux)n(\uy)\cD_{\uy} f(\uy)d\uy,
\]
where $\Gamma^*=-\Gamma\cup C_R(\ux)$ and $C_R(\ux)=\partial B_R(\ux)$. Then it follows from the identity
\[
\int_{C_R(\ux)} E_{0}(\uy-\ux)n(\uy) f(\uy)d\uy=\int_{C_R(\ux)} E_{0}(\uy-\ux)n(\uy)[f(\uy)-f(\infty)]d\uy+\Big(\int_{C_R(\ux)} E_{0}(\uy-\ux)n(\uy)d\uy\Big)f(\infty),
\]
the continuity of $f$ and the fact that $\int_{C_R(\ux)} E_{0}(\uy-\ux)n(\uy)d\uy=1$ that $$\int_{C_R(\ux)} E_{0}(\uy-\ux)n(\uy) f(\uy)d\uy\to f(\infty),\,\,\mbox{as}\,\,R\to \infty.$$ On the other hand, our assumption on the behavior at infinity implies $|\uy|\Vert\cD_{\uy} f(\uy)\Vert\leq \epsilon$ as $|\uy|\to \infty$ and it does so $\Vert\cD_{\uy} f(\uy)\Vert\to 0$ as $|\uy|\to\infty$, hence $|\uy-\ux|\Vert\cD_{\uy}f(\uy)\Vert\leq |\uy|\Vert\cD_{\uy}f(\uy)\Vert+|\ux|\Vert\cD_{\uy}f(\uy)\Vert\to 0$ as $|\uy|\to\infty$, that is, $\cD_{\uy} f(\uy)=\so\big(\frac{1}{|\uy-\ux|}\big)$ as $|\uy|\to \infty$ and therefore
\[
\Big\Vert\int_{C_R(\ux)} E_1(\uy-\ux)n(\uy) \cD_{\uy} f(\uy)d\uy\Big\Vert\leq \int_{C_R(\ux)} |E_1(\uy-\ux)|\Vert \cD_{\uy} f(\uy)\Vert d\uy\leq \frac{c\epsilon}{R^{m-1}}\int_{C_R(\ux)}d\uy\to 0 \,(\epsilon\to 0,\,R\to\infty).
\]

Under the above-mentioned assumptions, we arrive at a representation formula in the exterior domain
\begin{eqnarray}\label{repr_dom_-}
	f(\ux)=-\int_{\Gamma} E_0(\uy-\ux)n(\uy) f(\uy)d\uy+\int_{\Gamma} E_1(\uy-\ux)n(\uy)\cD_{\uy}f(\uy)d\uy+f(\infty),\,\,\ux\in\Omega_-.
\end{eqnarray}

Combining \eqref{repr_dom_+} with the Whitney extension theorem (component-wise applied to $\R_{0,m}$-valued functions) we introduce a Cauchy type transform of a first order Lipschitz function by
\begin{equation*}
[\cC_1 f]^{(0)}(\ux)=\int_\Gamma E_0(\uy-\ux)n(\uy)\f(\uy)d\uy-\int_\Gamma  E_1(\uy-\ux)n(\uy)\cD_{\uy}\f(\uy)d\uy,\,\,\ux\in\R^m\setminus\Gamma
\end{equation*}
and we do so its singular version
\begin{equation}\label{Def_S10}
[\cS_1f]^{(0)}(\uz)= 2\int_\Gamma E_0(\uy-\uz)n(\uy)\f(\uy)d\uy-\int_\Gamma  E_1(\uy-\uz)n(\uy)\cD_{\uy}\f(\uy)d\uy,\,\,\uz\in\Gamma
\end{equation}
where $\f$ denotes the $\R_{0,m}$-valued Whitney extension of $f\in\Li(1+\alpha,\Gamma)$. Note that $[\cC_1 f]^{(0)}$ is bimonogenic in $\R^m\setminus\Gamma$.\\

Due to the weakly singularity of the kernel $E_1$ we have from \eqref{cliff_Plemelj-Sokhotski_C0}
\begin{eqnarray}\label{cliff_Plemelj-Sokhotski_C1}
\begin{cases}
\big([\cC_1f]^{(0)}\big)^+(\uz)-\big([\cC_1f]^{(0)}\big)^-(\uz)=\f\vert_\Gamma=f^{(0)}(\uz),&\,\uz\in\Gamma\\
\big([\cC_1f]^{(0)}\big)^+(\uz)+\big([\cC_1f]^{(0)}\big)^-(\uz)=[\cS_1f]^{(0)}(\uz),&\,\uz\in\Gamma
\end{cases}
\end{eqnarray}
where, as usual, $\big([\cC_1f]^{(0)}\big)^\pm(\uz)=\lim\limits_{\ux\to \uz\atop \ux\in\Omega_\pm}[\cC_1f]^{(0)}(\ux).$\\

If we set 
\begin{eqnarray*}
[\cS_1f]^{(j)}(\uz)=2\int_\Gamma   E_0^{(j)}(\uy-\uz)n(\uy)R(\uy,\uz)d\uy-2\int_\Gamma   E_1^{(j)}(\uy-\uz)n(\uy)\cD_{\uy}R(\uy,\uz)d\uy+f^{(j)}(\uz),\,  |j|=1,
\end{eqnarray*}
then it follows that the first order Lipschitz class $\Li(1+\alpha,\Gamma)$ behaves invariant under the action of this singular integral operator \cite{RL}, that is, the Whitney data $\cS_1 f:=\{[\cS_1f]^{(j)},\,0\leq|j|\leq 1\}$ belongs to $\Li(1+\alpha,\Gamma)$, whenever $f\in\Li(1+\alpha,\Gamma)$. 
\\

We include for later reference the following lemma which was recently proved in \cite{AA}.
\begin{lemma}\label{Lfj}
Let $f,g\in\Li(1+\alpha,\Gamma)$ such that $f^{(0)}(\ux)=g^{(0)}(\ux)$ and $\sum\limits_{|j|=1}e_{(j)}f^{(j)}(\ux)=\sum\limits_{|j|=1}e_{(j)}g^{(j)}(\ux)$ for all $\ux\in\Gamma$. Then $f\equiv g$ in $\Gamma$, i.e. $f^{(j)}=g^{(j)}\,\,\forall |j|\leq k.$ 
\end{lemma}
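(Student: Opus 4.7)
The plan is to set $h := f-g$, which belongs to $\Li(1+\alpha,\Gamma)$ with $h^{(0)}\equiv 0$ and $\sum_{|j|=1}e_{(j)}h^{(j)}\equiv 0$ on $\Gamma$, and then prove that each $h^{(j)}$ with $|j|=1$ vanishes on $\Gamma$. The strategy is to combine the geometric constraint hidden in the Whitney compatibility condition (which forces the $\R^m$-valued "gradient field" of $h$ to be pointwise parallel to the normal to $\Gamma$) with the algebraic constraint coming from the hypothesis, exploiting the invertibility of the unit normal inside the Clifford algebra $\R_{0,m}$.

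The first step is to apply the compatibility condition for $|j|=0$ with $k=1$:
\[
R_0^h(\ux,\uy)=h^{(0)}(\ux)-h^{(0)}(\uy)-\sum_{i=1}^m h^{(i)}(\uy)(x_i-y_i),\qquad \|R_0^h(\ux,\uy)\|\le M|\ux-\uy|^{1+\alpha},
\]
so that $h^{(0)}\equiv 0$ on $\Gamma$ reduces the estimate to $\|\sum_{i} h^{(i)}(\uy)(x_i-y_i)\|\le M|\ux-\uy|^{1+\alpha}$ for all $\ux,\uy\in\Gamma$. Passing to any fixed real Clifford component $A$, dividing by $|\ux-\uy|$, and letting $\ux\to\uy$ along $\Gamma$ in an arbitrary tangent direction $\vec{t}$, I obtain $\sum_i t_i\,h^{(i)}_A(\uy)=0$. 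Since $\Gamma$ is a Liapunov surface, such tangent vectors span the full $(m-1)$-dimensional tangent space at $\uy$; hence the real vector $(h^{(1)}_A(\uy),\ldots,h^{(m)}_A(\uy))$ is parallel to the outward unit normal $n(\uy)$. Reassembling the Clifford components, this means $h^{(i)}(\uy)=\lambda(\uy)\,n_i(\uy)$ on $\Gamma$ for some $\R_{0,m}$-valued function $\lambda=\sum_A\lambda_A e_A$.

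The second step is to insert this representation into the hypothesis $\sum_i e_i h^{(i)}(\uy)=0$. Because each $n_i(\uy)\in\R$ commutes with every Clifford element,
\[
\sum_{i=1}^m e_i\,h^{(i)}(\uy)=\sum_i e_i\,n_i(\uy)\,\lambda(\uy)=\Bigl(\sum_i n_i(\uy)e_i\Bigr)\lambda(\uy)=n(\uy)\,\lambda(\uy)=0.
\]
Since $n(\uy)\in\R^m\subset\R_{0,m}$ is a unit vector, $n(\uy)^2=-1$, so $n(\uy)$ is invertible in $\R_{0,m}$ with inverse $-n(\uy)$. Left-multiplying yields $\lambda(\uy)\equiv 0$, and therefore $h^{(i)}(\uy)\equiv 0$ on $\Gamma$ for every $|j|=1$, which is the desired conclusion.

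The step I expect to be the main obstacle is the geometric identification in the first stage: recognizing that the first-order Whitney remainder $R_0^h$, combined with the vanishing of $h^{(0)}$, encodes precisely that $(h^{(1)}_A,\ldots,h^{(m)}_A)$ is pointwise normal to $\Gamma$. This in turn relies on being able to approximate every tangent vector at $\uy\in\Gamma$ by secant directions $(\ux-\uy)/|\ux-\uy|$ with $\ux\in\Gamma$, which the Liapunov hypothesis comfortably provides. Once this geometric reduction is in place, the Clifford-algebraic finish is essentially forced by $n^2=-1$.
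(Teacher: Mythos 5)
Your proof is correct and follows essentially the same route as the argument the paper relies on (the paper itself only cites this lemma from reference \cite{AA} rather than proving it): use the vanishing of $h^{(0)}$ in the Whitney compatibility condition to force the component vectors $(h^{(1)}_A,\dots,h^{(m)}_A)$ to be normal to $\Gamma$, then invert the unit normal in $\R_{0,m}$ via $n(\uy)^2=-1$ to kill the scalar factor. The only point worth stating explicitly is that on a Liapunov surface every tangent direction at $\uy$ is indeed a limit of normalized secants $(\ux-\uy)/|\ux-\uy|$ with $\ux\in\Gamma$, which you correctly flag and which is what makes the first step rigorous.
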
   

\section{Main results}\label{Sec_Main}
\begin{theorem}\label{Th_inv}
The singular integral operator $\cS_1$ is an involution. Namely,  
\begin{equation}\label{id_inv}
	[\cS_1^2f]^{(j)}=f^{(j)} \,\,\forall\,\,|j|\leq 1.
\end{equation}
\end{theorem}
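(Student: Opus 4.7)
My plan is to apply Lemma~\ref{Lfj}, which reduces the identity $\cS_1^2 f = f$ to the two scalar equalities on $\Gamma$: (i) $[\cS_1^2 f]^{(0)} = f^{(0)}$ and (ii) $\sum_{|j|=1} e_{(j)}[\cS_1^2 f]^{(j)} = \sum_{|j|=1} e_{(j)} f^{(j)}$. Both will be obtained by exploiting the two natural ``jumps'' that a bimonogenic function $\Phi := [\cC_1 f]^{(0)}$ has across $\Gamma$: the jump of $\Phi$ itself (encoded by \eqref{cliff_Plemelj-Sokhotski_C1}) and the jump of $\cD_\ux\Phi$.

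The first bridge is a straightforward kernel computation: since $\cD_\ux E_0(\uy-\ux)=0$ and $\cD_\ux E_1(\uy-\ux)=E_0(\uy-\ux)$ off the diagonal, differentiation under the integral sign yields $\cD_\ux\Phi(\ux) = \cC_0(u)(\ux)$ in $\R^m\setminus\Gamma$, where $u := \cD_{\uy}\f\big|_\Gamma = \sum_{|j|=1}e_{(j)}f^{(j)} \in \bC^{0,\alpha}(\Gamma)$. Consequently \eqref{cliff_Plemelj-Sokhotski_C0} gives $(\cD\Phi)^+ + (\cD\Phi)^- = \cS_0 u$ on $\Gamma$. I would then work directly from the explicit formula for $[\cS_1 f]^{(j)}$, using the expansion $\cD_\uy R(\uy,\uz) = \cD\f(\uy) - u(\uz)$, the fact that $\sum_{|j|=1} e_{(j)}$ applied to the derivatives of $E_0, E_1$ reproduces (up to sign) $\cD E_0 = 0$ and $\cD E_1 = E_0$, and the boundary identity $\int_\Gamma E_0(\uy-\uz)n(\uy)d\uy = \tfrac{1}{2}$ for $\uz\in\Gamma$, to establish the key identity
\begin{equation*}
\sum_{|j|=1}e_{(j)}[\cS_1 f]^{(j)} \;=\; \cS_0 u \qquad \text{on } \Gamma.
\end{equation*}
Iterating this identity with $\cS_1 f$ in place of $f$, together with the involution $\cS_0^2 = I$ on $\bC^{0,\alpha}(\Gamma)$, immediately produces (ii).

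For (i) I would argue by uniqueness. Define $\widetilde\Phi := \Phi$ on $\Omega_+$ and $\widetilde\Phi := -\Phi$ on $\Omega_-$, and set $\Psi := [\cC_1(\cS_1 f)]^{(0)}$. Both $\widetilde\Phi$ and $\Psi$ are bimonogenic in $\R^m\setminus\Gamma$ and vanish at infinity; moreover their value-jumps both equal $[\cS_1 f]^{(0)}$ (for $\widetilde\Phi$ this is $\Phi^+ + \Phi^-$ by \eqref{cliff_Plemelj-Sokhotski_C1}; for $\Psi$ this is \eqref{cliff_Plemelj-Sokhotski_C1} applied to $\cS_1 f$), while their $\cD$-jumps both equal $\cS_0 u$ thanks to the key identity. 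The difference $W := \Psi - \widetilde\Phi$ is then continuous across $\Gamma$ together with its first derivatives and harmonic on each side, so $\Delta W = 0$ distributionally on all of $\R^m$ (the boundary contribution vanishes because $(\cD W)^+ = (\cD W)^-$); elliptic regularity and Liouville's theorem force $W \equiv 0$. Comparing the sums of boundary values now gives
\begin{equation*}
[\cS_1^2 f]^{(0)} \;=\; \Psi^+ + \Psi^- \;=\; \widetilde\Phi^+ + \widetilde\Phi^- \;=\; \Phi^+ - \Phi^- \;=\; f^{(0)},
\end{equation*}
which is (i). Lemma~\ref{Lfj} then concludes $\cS_1^2 f = f$.

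The main obstacle I expect is the direct verification of the key identity $\sum_{|j|=1}e_{(j)}[\cS_1 f]^{(j)} = \cS_0 u$, since this is the single computation that relates the first-order Whitney data of $\cS_1 f$ to the classical Cauchy singular operator $\cS_0$ applied to the Dirac derivative of the Whitney extension; it requires careful bookkeeping of the derivatives of $E_0$ and $E_1$ together with the Taylor remainder $R(\uy,\uz)$. A secondary but routine concern is ensuring that continuity of $W$ and of $\cD W$ across the smooth surface $\Gamma$ really does upgrade bimonogenicity on each side to bimonogenicity on all of $\R^m$.
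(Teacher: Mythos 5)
Your proposal is correct in outline and rests on exactly the same pivot as the paper: the identity $\sum_{|j|=1}e_{(j)}[\cS_1 f]^{(j)}=\cS_0\big(\sum_{|j|=1}e_{(j)}f^{(j)}\big)$ (the paper's \eqref{idSj}), derived the same way (via $\cD_{\uy}R(\uy,\uz)=\cD\f(\uy)-\cD\f(\uz)$, the kernel relations, and $\int_\Gamma E_0(\uy-\uz)n(\uy)d\uy=\tfrac12$), and your treatment of the first--order components by iterating this identity and invoking $\cS_0^2=I$ plus Lemma~\ref{Lfj} is identical to the paper's. Where you genuinely diverge is the zeroth component. The paper computes $[\cS_1^2f]^{(0)}$ head--on: after the $E_0E_0$ term is absorbed by the classical involution, it must show that the cross terms $Q$ (an iterated integral mixing $E_0$ and $E_1$) vanish, which it does by a Cauchy--formula contour deformation around the two singular points and an explicit symmetry cancellation of the $\epsilon$--sphere integrals under $\uy\mapsto\uz-\underline{t}+\uzz$. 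You instead reflect $\Phi=[\cC_1f]^{(0)}$ to $\widetilde\Phi=\pm\Phi$, match its value-- and $\cD$--jumps with those of $\Psi=[\cC_1(\cS_1f)]^{(0)}$, and kill the difference by a transmission/Liouville argument. Your route buys a cleaner, computation--free proof of $[\cS_1^2f]^{(0)}=f^{(0)}$ and makes transparent why the involution holds (uniqueness of the sectionally bimonogenic function with prescribed jumps, which is essentially the paper's closing remark); the paper's route is self--contained at the level of kernels and avoids any appeal to distributional regularity. Two small points you should nail down: (a) the intermediate claim $\cD_\ux E_1(\uy-\ux)=E_0(\uy-\ux)$ has the wrong sign (it is $-E_0(\uy-\ux)$), although the sign cancels against the minus in front of the $E_1$ integral so your conclusion $\cD_\ux\Phi=\cC_0(\cD\f)$ is the correct one and agrees with the paper's proof of Theorem~\ref{Thm_Lip_plus}; (b) in the gluing step, continuity of $\cD W$ alone does not a priori give continuity of $\nabla W$ — you need to observe that continuity of $W$ forces the jump of $\nabla W$ to be purely normal, so $[\cD W]=[\partial_n W]\,\underline{n}=0$ with $\underline{n}$ invertible in $\R_{0,m}$ yields $[\partial_n W]=0$, after which the distributional Laplacian vanishes and Liouville applies.
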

\begin{proof} Let us first show that
\begin{eqnarray}\label{idSj}
\sum\limits_{|j|=1}e_{(j)}[\cS_1f]^{(j)}(\uz)=2\int_\Gamma  E_0(\uy-\uz)n(\uy)\Big(\sum\limits_{|j|=1}e_{(j)}f^{(j)}(\uy)\Big)d\uy.
\end{eqnarray}
Indeed,
\begin{eqnarray}\label{desglose}
\sum\limits_{|j|=1}e_{(j)}[\cS_1f]^{(j)}(\uz)&=&2\int_\Gamma  \sum\limits_{|j|=1}e_{(j)}E_0^{(j)}(\uy-\uz)n(\uy)R(\uy,\uz)d\uy-2\int_\Gamma  \sum\limits_{|j|=1}e_{(j)}E_1^{(j)}(\uy-\uz)n(\uy)\cD_{\uy}R(\uy,\uz)d\uy\nonumber\\
&&+\sum\limits_{|j|=1}e_{(j)}f^{(j)}(\uz)\nonumber\\
&=&2\int_\Gamma  \cD_{\uz}E_0(\uy-\uz)n(\uy)R(\uy,\uz)d\uy-2\int_\Gamma  \cD_{\uz} E_1(\uy-\uz)n(\uy)\cD_{\uy}R(\uy,\uz)d\uy\nonumber\\
&&+\sum\limits_{|j|=1}e_{(j)}f^{(j)}(\uz)\nonumber\\
&=&2\int_\Gamma  E_0(\uy-\uz)n(\uy)\cD_{\uy}R(\uy,\uz)d\uy+\sum\limits_{|j|=1}e_{(j)}f^{(j)}(\uz).
\end{eqnarray}	
Note that if $\exists i: j_i>l_i$, then $\partial_{\uy}^{(j)}(\uy-\uz)^{l}=0$. Thus, since $f^{(0)}(\uy)=f^{(0)}(\uz)+\displaystyle\sum\limits_{|l|=1}f^{(l)}(\uz)(\uy-\uz)^l+R(\uy,\uz)$, we have
\begin{eqnarray*}
\cD_{\uy}R(\uy,\uz)=\sum\limits_{|j|=1}e_{(j)}\partial_{\uy}^{(j)}\Big[\f(\uy)-\f(\uz)-\sum\limits_{|l|=1}\partial_{\uz}^{(l)}\f(\uz)(\uy-\uz)^l\Big]
=\sum\limits_{|j|=1}e_{(j)}\partial_{\uy}^{(j)}\f(\uy)-\sum\limits_{|j|=1}e_{(j)}\partial_{\uz}^{(j)}\f(\uz).
\end{eqnarray*}
When this is substituted in \eqref{desglose} we get
\begin{eqnarray*}
\sum\limits_{|j|=1}e_{(j)}[\cS_1f]^{(j)}(\uz)&=&2\int_\Gamma  E_0(\uy-\uz)n(\uy)\Bigg\{\sum\limits_{|j|=1}e_{(j)}\partial_{\uy}^{(j)}\f(\uy)-\sum\limits_{|j|=1}e_{(j)}\partial_{\uz}^{(j)}\f(\uz)\Bigg\}+\sum\limits_{|j|=1}e_{(j)}f^{(j)}(\uz).
\end{eqnarray*}	
Then \eqref{idSj} now follows from the above, since $\displaystyle\int_\Gamma  E_0(\uzz-\uy)n(\uzz)d\uzz=\frac{1}{2}$ when $\uy,\uzz\in\Gamma$.\\

Let us prove \eqref{id_inv} for $j=0.$
\begin{eqnarray*}
[\cS_1^2f]^{(0)}(\uz)&=&2\sum\limits_{s=0}^{1}\int_\Gamma   (-1)^sE_s(\uy-\uz)n(\uy)\cD_{\uy}^s\widetilde{\cS_kf}(\uy)d\uy\\
&=&2\int_\Gamma  E_0(\uy-\uz)n(\uy)\widetilde{\cS_1f}(\uy)d\uy-2\int_\Gamma  E_1(\uy-\uz)n(\uy)\cD_{\uy}\widetilde{\cS_1f}(\uy) d\uy\\
&=&2\int_\Gamma  E_0(\uy-\uz)n(\uy)\widetilde{\cS_1f}(\uy)d\uy-2\int_\Gamma  E_1(\uy-\uz)n(\uy)\sum\limits_{|j|=1}e_{(j)}\partial^{(j)}_{\uy}\widetilde{\cS_1f}(\uy)d\uy.
\end{eqnarray*}	

On substituting \eqref{Def_S10} and \eqref{idSj} together with the observation that $\puy^{(j)}\widetilde{\cS_1f}\vert_{\Gamma}=[\cS_1f]^{(j)}$ we obtain
\begin{eqnarray*}
[\cS_k^2f]^{(0)}(\uz)&=&2\int_\Gamma  E_0(\uy-\uz)n(\uy)[\cS_kf]^{(0)}(\uy)d\uy-2\int_\Gamma  E_1(\uy-\uz)n(\uy)\sum\limits_{|j|=1}e_{(j)}[\cS_1f]^{(j)}(\uy)d\uy\\
&=&2\int_\Gamma  E_0(\uy-\uz)n(\uy)\Big[2\int_\Gamma  E_0(\uzz-\uy)n(\uzz)\f(\uzz)d\uzz-2\int_\Gamma  E_1(\uzz-\uy)n(\uzz)\cD_{\uzz}\f(\uzz)d\uzz\Big]d\uy\\
&&-2\int_\Gamma  E_1(\uy-\uz)n(\uy)\Big[2\int_\Gamma  E_0(\uzz-\uy)n(\uzz)\Big(\sum\limits_{|j|=1}e_{(j)}f^{(j)}(\uzz)\Big)d\uzz\Big]d\uy.
\end{eqnarray*}	

Since $2\int_\Gamma  E_0(\uy-\uz)n(\uy)\Big[2\int_\Gamma  E_0(\uzz-\uy)n(\uzz)\f(\uzz)d\uzz\Big]d\uy=f^{(0)}(\uz)$ according to the classical involution property and $\sum_{|j|=1}e_{(j)}f^{(j)}(\uy)=\sum_{|j|=1}e_{(j)}\puy^{(j)} \f(\uy)=\cD_{\uy} \f(\uy),$ it is therefore enough to show that the following expression vanishes
\begin{eqnarray*}
Q&:=&2\int_\Gamma  E_0(\uy-\uz)n(\uy)\Big[2\int_\Gamma  -E_1(\uzz-\uy)n(\uzz)\cD_{\uzz}\f(\uzz)d\uzz\Big]d\uy-2\int_\Gamma  E_1(\uy-\uz)n(\uy)\Big[2\int_\Gamma  E_0(\uzz-\uy)n(\uzz)\cD_{\uzz}\f(\uzz)d\uzz\Big]d\uy\nonumber\\
&=&2\sum\limits_{s=0}^1 \int_\Gamma  (-1)^s E_s(\uy-\uz)n(\uy) \Big[2\int_\Gamma  -\cD_{\uy}^sE_1(\uzz-\uy)n(\uzz)\cD_{\uzz} \f(\uzz)d\uzz\Big] d\uy.
\end{eqnarray*}

Fubini's theorem leads to
\begin{eqnarray*}
Q&=&4\int_\Gamma  \int_\Gamma  \sum\limits_{s=0}^{1}(-1)^sE_s(\uy-\uz)n(\uy)\Big[-\cD_{\uy}^sE_1(\uzz-\uy)d\uy\Big]n(\uzz)\cD_{\uzz}d\uzz.
\end{eqnarray*}

Define $F(\uy):=-E_1(\uzz-\uy)$. It suffices to show that
\begin{eqnarray*}
\sum\limits_{s=0}^{1}\int_\Gamma  (-1)^sE_s(\uy-\uz)n(\uy)\cD_{\uy}^s F(\uy)d\uy=0.
\end{eqnarray*}	
Let $\Gamma^*=\Gamma\setminus(\Gamma_{\epsilon}(\uz)\cup\Gamma_{\epsilon}(\uzz))\cup -C^*_{\epsilon}(\uz)\cup -C^*_{\epsilon}(\uzz)$ and $\Omega_*$ its interior. Clearly, $F(\uy)$ is bimonogenic on $\Omega_*$ and continuous on $\Omega_*\cup\Gamma^*$, so we can apply Cauchy's formula (for $\uz\in\R^m\setminus\overline{\Omega}_*$) to get
\begin{eqnarray*}
\sum\limits_{s=0}^{1}\int_{\Gamma^*} (-1)^sE_s(\uy-\uz)n(\uy)\cD_{\uy}^s F(\uy)d\uy=0.
\end{eqnarray*}	
Then we are reduced to proving that
\begin{eqnarray}\label{lim}
\lim\limits_{\epsilon\to 0}
\Bigg\{\sum\limits_{s=0}^{1}\int_{C^*_{\epsilon}(\uz)\cup C^*_{\epsilon}(\uzz)} (-1)^sE_s(\uy-\uz)n(\uy)\cD_{\uy}^s F(\uy)d\uy\Bigg\}=0.
\end{eqnarray}
 
We compute
\begin{eqnarray}\label{e0e1}
&&\int_{C^*_{\epsilon}(\uz)\cup C^*_{\epsilon}(\uzz)}\bigg[E_0(\uy-\uz)n(\uy)E_1(\uzz-\uy)+E_1(\uy-\uz)n(\uy)E_0(\uzz-\uy)\bigg]d\uy =\nonumber\\
&=&c_m\Biggl\{    
\int_{C^*_{\epsilon}(\uz)}\frac{-(\uy-\uz)}{|\uy-\uz|^{m}}\frac{\uy-\uz}{|\uy-\uz|}\frac{1}{|\uzz-\uy|^{m-2}}d\uy+\int_{C^*_{\epsilon}(\uz)}\frac{1}{|\uy-\uz|^{m-2}}\frac{\uy-\uz}{|\uy-\uz|}\frac{-(\uzz-\uy)}{|\uzz-\uy|^{m}}d\uy\nonumber\\
&&+\int_{C^*_{\epsilon}(\uzz)}\frac{-(\uy-\uz)}{|\uy-\uz|^{m}}\frac{\uy-\uzz}{|\uy-\uzz|}\frac{1}{|\uzz-\uy|^{m-2}}d\uy+\int_{C^*_{\epsilon}(\uzz)}\frac{1}{|\uy-\uz|^{m-2}}\frac{\uy-\uzz}{|\uy-\uzz|}\frac{-(\uzz-\uy)}{|\uzz-\uy|^{m}}d\uy\Biggr\}
\nonumber\\
&=&\frac{c_m}{\epsilon^{m-1}}\Bigg\{\int_{C^*_{\epsilon}(\uz)}\frac{d\uy}{|\uzz-\uy|^{m-2}}-\int_{C^*_{\epsilon}(\uzz)}\frac{d\uy}{|\uy-\uz|^{m-2}}\nonumber\\
&&+\int_{C^*_{\epsilon}(\uz)}\frac{(\uy-\uz)(\uy-\uzz)}{|\uzz-\uy|^{m}}d\uy-\int_{C^*_{\epsilon}(\uzz)}\frac{(\uy-\uz)(\uy-\uzz)}{|\uy-\uz|^{m}}d\uy\Bigg\}.
\end{eqnarray}	 	

After the change of variable $\uy= \uz-\underline{t}+\uzz$, we get that the first and second expressions in curly brackets are equal. This observation applies to the third and fourth terms as well
and $[\cS_1^2f]^{(0)}=f^{(0)}$ as claimed. 
Now that we have the above claim, we set $G=\{[\cS_1^2f]^{(j)}(\uz)-f^{(j)}(\uz),\,|j|\leq 1\}$. Clearly, $G^{(0)}=0$. By Plemelj-Privalov's theorem \cite{RL}, $G\in\Li(1+\alpha,\Gamma)$ and according to Lemma \ref{Lfj} it suffices to show $\sum\limits_{|j|=1}e_{(j)}G^{(j)}=0.$

On applying \eqref{idSj} twice, we obtain
\begin{eqnarray*}
\sum\limits_{|j|=1}e_{(j)}G^{(j)}(\uz)	&=&
\sum\limits_{|j|=1}e_{(j)}\big[\cS_1(\cS_1f)\big]^{(j)}(\uz)-\sum\limits_{|j|=1}e_{(j)}f^{(j)}(\uz)\nonumber\\
&=&2\int_\Gamma  E_0(\uy-\uz)n(\uy)\Bigg(\sum\limits_{|j|=1}e_{(j)}(\cS_1f)^{(j)}(\uy)\Bigg)d\uy-\sum\limits_{|j|=1}e_{(j)}f^{(j)}(\uz)\nonumber\\
&=&2\int_\Gamma  E_0(\uy-\uz)n(\uy)\Bigg\{
2\int_\Gamma  E_0(\uzz-\uy)n(\uzz)\Big(\sum\limits_{|j|=1}e_{(j)}f^{(j)}(\uzz)\Big)d\uzz
\Bigg\}d\uy-\sum\limits_{|j|=1}e_{(j)}f^{(j)}(\uz).
\end{eqnarray*}

By the involution property of the classical singular operator, we get $\sum\limits_{|j|=1}e_{(j)}G^{(j)}(\uz)=0$, which completes the proof.
\end{proof} 
It is immediate that the operators $\cP^+=\frac{1}{2}(I+\cS_1)$ and $\cP^-=\frac{1}{2}(I-\cS_1)$
are projections on $\Li(1+\alpha,\Gamma)$, that is
\[
\cP^+\cP^+=\cP^+,\,\cP^-\cP^-=\cP^-,\,\cP^+\cP^-=0,\,\cP^-\cP^+=0.
\]
Consequently, $\Li(1+\alpha,\Gamma)=\Li^+(1+\alpha,\Gamma)\oplus\Li^-(1+\alpha,\Gamma),$
where $\Li^\pm(1+\alpha,\Gamma):=\mbox{im}\cP^\pm$. 

\begin{theorem} \label{Thm_Lip_plus}
The Whitney data $f\in\emph{\Li}(1+\alpha,\Gamma)$ belongs to $\emph{\Li}^+(1+\alpha,\Gamma)$ if and only if there exists a bimonogenic function $F$ in $\Omega_+$ which together with $\cD_{\ux}F$ continuously extends to $\Gamma$ and such that
\begin{equation}\label{trace+}
F|_\Gamma =f^{(0)},\,\,\cD_{\ux}F|_\Gamma =\sum\limits_{|j|=1}e_{(j)}f^{(j)}.
\end{equation}
\end{theorem}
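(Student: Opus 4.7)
The plan is to use $F=[\cC_1f]^{(0)}|_{\Omega_+}$ as the natural candidate bimonogenic function in both directions, matching boundary data via the Plemelj-Sokhotski formulas \eqref{cliff_Plemelj-Sokhotski_C1}, the auxiliary identity \eqref{idSj} established inside the proof of Theorem \ref{Th_inv}, and Lemma \ref{Lfj} to upgrade information on $f^{(0)}$ and on $\sum_{|j|=1}e_{(j)}f^{(j)}$ to full equality of Whitney data.

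For the forward direction, assume $f\in\Li^+(1+\alpha,\Gamma)$, i.e.\ $\cS_1f=f$. Put $F:=[\cC_1f]^{(0)}|_{\Omega_+}$; this is bimonogenic in $\Omega_+$ by the remark following \eqref{Def_S10}. The first line of \eqref{cliff_Plemelj-Sokhotski_C1} combined with $[\cS_1f]^{(0)}=f^{(0)}$ yields $F|_\Gamma=\frac12(f^{(0)}+[\cS_1f]^{(0)})=f^{(0)}$. To handle the gradient, I would exploit $\cD_{\ux}E_0(\uy-\ux)=0$ and $\cD_{\ux}E_1(\uy-\ux)=-E_0(\uy-\ux)$ to compute $\cD_{\ux}F(\ux)=\cC_0(\cD_{\uy}\f)(\ux)$, where $\cD_{\uy}\f|_\Gamma=\sum_{|j|=1}e_{(j)}f^{(j)}=:g\in\bC^{0,\alpha}(\Gamma)$. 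The classical Plemelj-Sokhotski formula \eqref{cliff_Plemelj-Sokhotski_C0} then gives $(\cD_{\ux}F)^+=\tfrac12(g+\cS_0g)$, and invoking \eqref{idSj} with $\cS_1f=f$ forces $\cS_0g=g$; hence $\cD_{\ux}F|_\Gamma=g$ and, by the same token, $\cD_{\ux}F$ extends continuously up to $\Gamma$.

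For the converse, suppose such an $F$ exists. Applying the bimonogenic representation formula \eqref{repr_dom_+} to $F$ in $\Omega_+$ and substituting the boundary data \eqref{trace+} together with $\f|_\Gamma=f^{(0)}$ and $\cD_{\uy}\f|_\Gamma=\sum_{|j|=1}e_{(j)}f^{(j)}$ identifies $F\equiv[\cC_1f]^{(0)}$ on $\Omega_+$. Taking limits to $\Gamma$ and using \eqref{cliff_Plemelj-Sokhotski_C1} gives $f^{(0)}=F|_\Gamma=\frac12(f^{(0)}+[\cS_1f]^{(0)})$, so $[\cS_1f]^{(0)}=f^{(0)}$. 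Next, since $F$ is bimonogenic, $G:=\cD_{\ux}F$ is left monogenic in $\Omega_+$ and continuously extends to $\Gamma$ with $G|_\Gamma=g$; representing $G$ by $\cC_0(g)$ and taking boundary values through \eqref{cliff_Plemelj-Sokhotski_C0} forces $\cS_0g=g$. Combined with \eqref{idSj}, this reads $\sum_{|j|=1}e_{(j)}[\cS_1f]^{(j)}=\cS_0g=\sum_{|j|=1}e_{(j)}f^{(j)}$. Finally, Lemma \ref{Lfj} applied to $\cS_1f$ and $f$ (both in $\Li(1+\alpha,\Gamma)$ by the invariance property quoted after \eqref{Def_S10}) yields $\cS_1f=f$, i.e.\ $f\in\Li^+(1+\alpha,\Gamma)$.

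The main subtlety is the coupling between the $j=0$ and $|j|=1$ boundary identities: the scalar jump handles $f^{(0)}$ directly, but recovering the correct first-order Whitney components demands that $\cD_{\ux}F$ itself be recognised as a \emph{monogenic} object satisfying a classical Hardy condition; bridging this with the singular operator $\cS_1$ is exactly what identity \eqref{idSj} accomplishes, and Lemma \ref{Lfj} then closes the argument by promoting the two partial matches to full equality of Whitney collections.
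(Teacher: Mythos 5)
Your proposal is correct and follows essentially the same route as the paper: the candidate $F=[\cC_1\cdot]^{(0)}$, the jump formulas \eqref{cliff_Plemelj-Sokhotski_C0}--\eqref{cliff_Plemelj-Sokhotski_C1}, identity \eqref{idSj}, and Lemma \ref{Lfj} to pass from the two scalar boundary matches to equality of the full Whitney data. The only (harmless) cosmetic difference is that in the necessity part you use the characterization $\cS_1f=f$ and take the density $f$ itself, whereas the paper writes $f=\frac12(g+\cS_1g)$ and works with $[\cC_1g]^{(0)}$.
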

\begin{proof}
The proof proceeds along the same lines as the proof of \cite[Theorem 6]{AA}, but we use \eqref{idSj} instead. Let us prove necessity. By definition, if $f\in\Li^+(1+\alpha,\Gamma)$ there exists $g\in\Li(1+\alpha,\Gamma)$ such that $f=\frac{1}{2}(g+\cS_1g)$, i.e.  
$f^{(j)}(\ux)=\frac{1}{2}[I^{(j)}+\cS_1^{(j)}]g, |j|\leq 1$, where $I^{(j)}g=g^{(j)}$ is the identity operator. Let us introduce the  function $F$ given by the Cauchy type transform $F(\ux)=[\cC_1g]^{(0)}(\ux),\,\ux\in\Omega_+$, which is clearly bimonogenic in $\Omega_+$. Thus, for $\uz\in\Gamma$ we get from \eqref{cliff_Plemelj-Sokhotski_C1} that
\[
F(\uz)=\lim_{\ux\to\uz\atop \ux\in\Omega_+}F(\ux)=
\big([\cC_1g]^{(0)}\big)^+(\uz)=\frac{1}{2}[I^{(0)}+\cS_1^{(0)}]g=f^{(0)}(\uz).
\]

On the other hand, 
\[
\cD_{\ux}F(\ux)=\cD_{\ux}\Big[\int_\Gamma  E_0(\uy-\ux)n(\uy)\g(\uy)d\uy-\int_\Gamma  E_1(\uy-\ux)n(\uy)\cD_{\uy}\g(\uy)d\uy\Big]=\cC_0[\cD\g](\ux)=\cC_0\Big[\sum\limits_{|j|=1}e_{(j)}\partial_{\ux}^{(j)}\g\Big](\ux).
\]
From \eqref{cliff_Plemelj-Sokhotski_C0} we have for $\uz\in\Gamma$
\begin{eqnarray*}
[\cD_{\ux}F](\uz)=\lim_{\ux\to\uz\atop \ux\in\Omega_+}\cD_{\ux}F(\ux)=\Big[\cC_0(\cD_{\ux}\g)\Big]^+(\uz)&=&\frac{1}{2}[I+\cS_0]\Big(\sum\limits_{|j|=1}e_{(j)}g^{(j)}\Big)(\uz)\\
&=&\frac{1}{2}\Big[\sum\limits_{|j|=1}e_{(j)}g^{(j)}(\uz)+2\int_\Gamma  E_0(\uy-\uz)n(\uy)\Big(\sum\limits_{|j|=1}e_{(j)}g^{(j)}(\uy)\Big)d\uy\Big].
\end{eqnarray*}
Hence it follows from \eqref{idSj} that $\cD_{\ux}F\vert_\Gamma=\sum\limits_{|j|=1}e_{(j)}f^{(j)}(\uz).$\\

We now prove sufficiency. Assume there exists such bimonogenic function $F$ satisfying \eqref{trace+}. It turns out that nothing more need be done to prove that $[\cP^+f]^{(j)}=f^{(j)}$ for all $0\le |j|\le 1$. Indeed, in that case, $f\in\mbox{im}\cP^+$ as claimed. 
Combining our assumptions with \eqref{repr_dom_+} gives
\begin{eqnarray*}
F(\ux)&=&\int_\Gamma  E_0(\uy-\ux)n(\uy)F(\uy)d\uy-\int_\Gamma  E_1(\uy-\ux)n(\uy)\cD_{\uy}F(\uy)d\uy,\,\,\ux\in\Omega_+\nonumber\\
&=&\int_\Gamma  E_0(\uy-\ux)n(\uy)f^{(0)}(\uy)d\uy-\int_\Gamma  E_1(\uy-\ux)n(\uy)\Big(\sum\limits_{|j|=1}e_{(j)}f^{(j)}(\uy)\Big)d\uy=[\cC_1f]^{(0)}(\ux).
\end{eqnarray*}
It is easy to see from this and \eqref{cliff_Plemelj-Sokhotski_C1} that
\begin{eqnarray*}
[\cP^+f]^{(0)}(\uz):=\frac{1}{2}[I^{(0)}+\cS_1^{(0)}]f(\uz)=\big([\cC_1f]^{(0)}\big)^+(\uz)=F(\uz)=f^{(0)}(\uz).
\end{eqnarray*}
On the other hand,
\begin{eqnarray}\label{ejP}
\sum\limits_{|j|=1}e_{(j)}[\cP^+f]^{(j)}(\uz)&:=&\sum\limits_{|j|=1}e_{(j)}\frac{1}{2}[I^{(j)}+\cS_1^{(j)}]f(\uz)\nonumber\\
&=&\frac{1}{2}\Big[\sum\limits_{|j|=1}e_{(j)}f^{(j)}(\uz)+\sum\limits_{|j|=1}e_{(j)}[\cS_1f]^{(j)}(\uz)\Big]\nonumber\\
&=&
\frac{1}{2}\Big[\sum\limits_{|j|=1}e_{(j)}f^{(j)}(\uz)+2\int_\Gamma  E_0(\uy-\uz)n(\uy)\Big(\sum\limits_{|j|=1}e_{(j)}f^{(j)}(\uy)\Big)d\uy\Big]
\end{eqnarray}
the last equality being a consequence of \eqref{idSj}. Since the density function in \eqref{ejP} satisfies the H\"older condition on $\Gamma$ and due to \eqref{trace+} it represents the interior limiting value of the monogenic function $\cD_{\ux} F$ then \eqref{cliff_Plemelj-Sokhotski_C0} 
yields
\[
\sum\limits_{|j|=1}e_{(j)}f^{(j)}(\uz)=\Big[\cC_0\big(\sum\limits_{|j|=1}e_{(j)}f^{(j)}\big)\Big]^+(\uz)
=\frac{1}{2}\big[I+\cS_0\big]\big(\sum\limits_{|j|=1}e_{(j)}f^{(j)}\big)(\uz),\]
or equivalently
\[
\frac{1}{2}\Big[\sum\limits_{|j|=1}e_{(j)}f^{(j)}(\uz)\Big]=\cS_0\Big(\sum\limits_{|j|=1}e_{(j)}f^{(j)}\Big)(\uz),
\]
which gives $$\sum\limits_{|j|=1}e_{(j)}[\cP^+f]^{(j)}(\uz)=\sum\limits_{|j|=1}e_{(j)}f^{(j)}(\uz)$$
when substituted in \eqref{ejP} and hence the assertion follows from Lemma \ref{Lfj}.
\end{proof}
Similar arguments to those above, but using \eqref{repr_dom_-} instead, show the following theorem and its proof is omitted.
\begin{theorem}\label{Thm_Lip_minus} 
The Whitney data $f\in\emph{\Li}(1+\alpha,\Gamma)$ belongs to $\emph{\Li}^-(1+\alpha,\Gamma)$ if and only if there exists a bimonogenic function $F$ in $\Omega_-$ vanishing at infinity satisfying $\cD_{\ux}F=\so\big(\displaystyle\frac{1}{|\ux|}\big)$ as $\ux\to\infty$, which together with $\cD_{\ux}F$ continuously extends to $\Gamma$ and such that
\begin{equation*}\label{trace-}
F|_\Gamma =f^{(0)},\,\,\cD_{\ux}F|_\Gamma =\sum\limits_{|j|=1}e_{(j)}f^{(j)}.
\end{equation*}
\end{theorem}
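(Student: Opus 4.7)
My plan is to mirror the proof of Theorem \ref{Thm_Lip_plus}, replacing the interior representation formula \eqref{repr_dom_+} with its exterior counterpart \eqref{repr_dom_-} and tracking the sign changes that arise from the reversed role of $\Gamma$ with respect to $\Omega_-$. The roles of the Hardy projections $\cP^+$ and $\cP^-$ are swapped, and the Cauchy transform $[\cC_1\cdot]^{(0)}$ is replaced by its negative on the exterior.

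For necessity, I would assume $f=\cP^-g=\tfrac{1}{2}(g-\cS_1 g)$ with $g\in\Li(1+\alpha,\Gamma)$, and set $F(\ux):=-[\cC_1 g]^{(0)}(\ux)$ for $\ux\in\Omega_-$. Bimonogenicity is inherited from $[\cC_1 g]^{(0)}$, while the decay of $E_0$ and $E_1$ gives both $F(\infty)=0$ and $\cD_{\ux}F(\ux)=-\cC_0[\cD_{\uy}\g](\ux)=\so(1/|\ux|)$. The trace $F|_\Gamma=f^{(0)}$ is read off from the second line of \eqref{cliff_Plemelj-Sokhotski_C1}: $-\big([\cC_1 g]^{(0)}\big)^-=\tfrac{1}{2}(I-\cS_1^{(0)})g=f^{(0)}$. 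The trace of $\cD_{\ux}F$ is then obtained by combining \eqref{cliff_Plemelj-Sokhotski_C0} (applied to the H\"older density $\sum_{|j|=1}e_{(j)}g^{(j)}$) with the key identity \eqref{idSj}, exactly as in the proof of Theorem \ref{Thm_Lip_plus} but with the extra minus sign absorbed into the definition of $F$.

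For sufficiency, given such $F$, the exterior representation formula \eqref{repr_dom_-} is applicable (since $F(\infty)=0$ and $\cD_{\ux}F=\so(1/|\ux|)$), and together with the prescribed traces it yields $F(\ux)=-[\cC_1 f]^{(0)}(\ux)$ for $\ux\in\Omega_-$. Taking the $(-)$-limit gives $[\cP^-f]^{(0)}(\uz)=-\big([\cC_1 f]^{(0)}\big)^-(\uz)=F(\uz)=f^{(0)}(\uz)$. For the multi-index part, $\cD_{\ux}F$ is monogenic in $\Omega_-$ and vanishes at infinity, so the exterior Cauchy representation together with the Plemelj-Sokhotski formula \eqref{cliff_Plemelj-Sokhotski_C0} places its boundary trace in the exterior Hardy class, i.e.\ $\cS_0\bigl(\sum_{|j|=1}e_{(j)}f^{(j)}\bigr)=-\sum_{|j|=1}e_{(j)}f^{(j)}$ on $\Gamma$. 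Substituting this and \eqref{idSj} into
\begin{equation*}
\sum_{|j|=1}e_{(j)}[\cP^-f]^{(j)}(\uz)=\tfrac{1}{2}\Bigl(\sum_{|j|=1}e_{(j)}f^{(j)}(\uz)-\sum_{|j|=1}e_{(j)}[\cS_1 f]^{(j)}(\uz)\Bigr)
\end{equation*}
gives $\sum_{|j|=1}e_{(j)}[\cP^-f]^{(j)}=\sum_{|j|=1}e_{(j)}f^{(j)}$. Lemma \ref{Lfj} then forces $[\cP^-f]^{(j)}=f^{(j)}$ for all $|j|\le 1$.

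The only real subtlety compared to the interior case is bookkeeping: one must verify that the decay hypothesis $\cD_{\ux}F=\so(1/|\ux|)$ is precisely what guarantees the vanishing of the large-sphere contributions in the exterior Cauchy representation for the monogenic function $\cD_{\ux}F$, which is what places $\cD_{\ux}F|_\Gamma$ in the exterior Hardy space of $\cS_0$. Once this is recorded, every other step is a faithful transcription of the proof of Theorem \ref{Thm_Lip_plus}, which is presumably why the authors elect to omit it.
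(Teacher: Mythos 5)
Your proposal is correct and is precisely the argument the paper has in mind: the authors omit the proof, stating only that it follows by the same reasoning as Theorem \ref{Thm_Lip_plus} with the exterior representation formula \eqref{repr_dom_-} in place of \eqref{repr_dom_+}, and your sign bookkeeping (taking $F=-[\cC_1 g]^{(0)}$, using $\cS_0 h=-h$ for exterior traces, and concluding via \eqref{idSj} and Lemma \ref{Lfj}) carries that out faithfully.
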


In other words, given $f\in\Li(1+\alpha,\Gamma)$, the problem of finding a sectionally bimonogenic $F$ satisfying the boundary conditions 
\[
\begin{cases}
F^+(\uz)-F^-(\uz)=f^{(0)}(\uz),&\,\uz\in\Gamma\\
[\cD_{\ux} F]^+(\uz)-[\cD_{\ux} F]^-(\uz)=\sum\limits_{|j|=1}e_{(j)}f^{(j)}(\uz),&\,\uz\in\Gamma\\
F(\infty)=0,\,\,\cD_{\ux}F=\so\big(\dfrac{1}{|\ux|}\big) ,& {\mbox{as}}\,\, \ux\to\infty
\end{cases}
\]
has a unique solution and it is expressed by $F=[\cC_1f]^{(0)}.$
\section{Concluding remarks}
In the case of considering $k$-monogenic functions with $k>1$, a general representation formula allowed us in \cite{RL} to define the associated Cauchy transform and the singular integral operator $\cS_k$ related to Lipschitz functions of arbitrary order $\Li(k+\alpha,\Gamma)$. We can ask whether Theorems \ref{Th_inv} - \ref{Thm_Lip_minus} are still valid for polymonogenic functions and Lipschitz classes of arbitrary order. Partial evidence support the belief that the method developed in this paper can also be successfully applied to the general case. The confirmation of this hypothesis inspires further analysis and research, which will be the subject of future work.
\section{Acknowledgments}
L. De la Cruz Toranzo was supported by a Research Fellowship under the auspices of the Alexander von Humboldt Foundation.

\end{document}